\newtheorem{theorem}{Theorem}[section]
\newtheorem{lemma}[theorem]{Lemma}
\newtheorem{proposition}[theorem]{Proposition}
\def\Xint#1{\mathchoice
{\XXint\displaystyle\textstyle{#1}}%
{\XXint\textstyle\scriptstyle{#1}}%
{\XXint\scriptstyle\scriptscriptstyle{#1}}%
{\XXint\scriptscriptstyle\scriptscriptstyle{#1}}%
\!\int}
\def\XXint#1#2#3{{\setbox0=\hbox{$#1{#2#3}{\int}$ }
\vcenter{\hbox{$#2#3$ }}\kern-.6\wd0}}
\def\dashint{\Xint-}
\numberwithin{equation}{section}
\begin{document}

\title[Weighted boundedness of $M^\otimes$]{Weighted boundedness of the 2-fold product of Hardy-Littlewood maximal operators}

\author[M. J. Carro]{Mar\'{\i}a J. Carro$^*$}

\address{Departament de Matem\`atiques i Inform\`atica, Universitat de Barcelona, 08007 Barcelona, Spain.} 
\email{carro@ub.edu}

\author[E. Roure]{Eduard Roure$^{**}$}
\address{Departament de Matem\`atiques i Inform\`atica, Universitat de Barcelona, 08007 Barcelona, Spain.} 
\email{eroure@ub.edu}

\thanks{The authors were supported by grants MTM2016-75196-P (MINECO / FEDER, UE) and 2014SGR289.\\ 
\indent\emph{E-mail addresses:} $^{*}$\texttt{carro@ub.edu}, $^{**}$\texttt{eroure@ub.edu}
}

\subjclass[2010]{42B99, 46E30}

\keywords{H\"older's inequality,  Muckenhoupt  weights}

\begin{abstract}
We study new weighted estimates  for the 2-fold product of Hardy-Littlewood maximal operators defined by $M^{\otimes}(f,g):= MfMg$. This operator appears very naturally in the theory of bilinear operators such as the bilinear Calder\'on-Zygmund operators, the bilinear Hardy-Littlewood maximal operator introduced by Calder\'on or in the study of  pseudodifferential operators.  To this end, we need to study H\"older's inequality for Lorentz spaces with change of measures 
$$
 \Vert fg \Vert_{L^{p,\infty}\left(w_1^{p/p_1} w_2^{p/p_2}\right)} \le 
C  \Vert f \Vert_{L^{p_1,\infty}(w_1 )}  \Vert g \Vert_{L^{p_2,\infty}(w_2 )}. 
$$
Unfortunately, we shall prove that  this inequality does not hold,  in general,  and we shall have to consider a weaker version of it. \end{abstract}

\maketitle

\section{Introduction and motivation}

Let us consider the Hardy-Littlewood maximal operator $M$, defined for locally integrable functions on $\mathbb R^n$ by
$$
Mf(x):=\sup_{Q\ni x}\frac{1}{|Q|}\int_Q |f(y)|dy,
$$
where the supremum is taken over all cubes $Q\subseteq\mathbb R^n$ containing $x$. The boundedness of $M$ in weighted Lebesgue spaces $L^p(w)$ is well understood since 1972 when Muckenhoupt \cite{Muckenhoupt TAMS72} proved that,  for every $1<p<\infty$, 
$$
M:L^p(w)\longrightarrow L^p(w) \quad\iff\quad w\in A_p,
$$
where $w\in A_p$ if $w$ is a positive and locally integrable function (called weight) such that 
$$
[ w]_{A_p}:= \sup_{Q} \left(\frac 1{|Q|} \int_Q w (x) dx\right) \left(\frac 1{|Q|} \int_Q w (x) ^{1-p'}dx \right)^{p-1} <\infty. 
$$

Moreover, in the context of weak type inequalities, if $1\le p<\infty$, 
$$
M:L^p(w)\longrightarrow L^{p, \infty}(w) \quad\iff\quad w\in A_p,
$$
where $w\in A_1$ if 
$$
Mw(x)\le C w(x), \quad \mbox{a.e. } x,
$$
and the infimum of all such constants $C$ in the above inequality is denoted by $[w]_{A_1}$.  Also, in the context of restricted weak type inequalities the following result was proved in \cite{chk:chk, KermanT SM82}: 
$$
M:L^{p,1}(w) \longrightarrow L^{p, \infty}(w) \quad\iff\quad w\in A_p^{\mathcal R},
$$
where a weight $w\in A_p^{\mathcal R}$ if 
\begin{equation*}
[w]_{A_p^{\mathcal R}} :=\sup_{Q} w(Q)^{1/p}\frac{\Vert \chi_Q w^{-1}\Vert_{L^{p',\infty}(w)}}{|Q|} <\infty.
\end{equation*}
For any measurable set $F$, we write $w(F)=\int_{F}w(x) dx$; if $w=1$, we simply write $|F|$. Moreover, 
\begin{equation*}
\Vert M\Vert_{L^{p,1}(w)\to L^{p,\infty}(w)} \lesssim [ w]_{A_p^{\mathcal R}}.
\end{equation*}

Our main goal in this paper is to study weighted estimates for the 2-fold product of Hardy-Littlewood maximal operators, defined for locally integrable functions $f$ and $g$ in the most simple way:
 $$
M^{\otimes}(f,g)(x):= Mf(x)Mg(x). 
$$

Most of the results in this paper can be extended to the $k$-fold product of Hardy-Littlewood maximal operators
  $$
M^{\otimes}(f_1, \dots, f_k)(x):= Mf_1(x)\cdots Mf_k(x), 
$$
but, for simplicity, we shall only present the case $k=2$.

This operator has been  very useful to obtain weighted estimates for several types of multilinear operators, such as the following ones: 
 
 \noindent
 1) The bilinear Hardy-Littlewood maximal operator: it was introduced by A. Calder\'on in 1964 and it is defined by 
 $$
 \mathcal  M(f,g)(x):=\sup_{r>0} \frac 1{|B(0, r)|}\int_{B(0,r)} |f(x-y)| |g(x+y)| dy. 
 $$
 
Using H\"older's inequality, we have that 
$$
 \mathcal M(f,g) \lesssim M(f^{1/\theta})^{\theta} M(g^{1/(1-\theta)})^{1-\theta}, 
 $$
 for every $0<\theta <1$ and hence, 
 \begin{equation}\label{lacey}
 \mathcal  M:L^{p_1}(\mathbb R^n)\times L^{p_2}(\mathbb R^n) \longrightarrow L^{p}(\mathbb R^n),
 \end{equation}
 for every $\frac 1p=\frac 1{p_1}+\frac 1{p_2}$ and  $p>1$,  and he conjectured that 
 $$
\mathcal  M:L^{2}(\mathbb R^n)\times L^{2}(\mathbb R^n) \longrightarrow L^{1}(\mathbb R^n).
 $$
 
 This conjecture was shown to be true by Lacey in 2000 (see \cite{l:l}),  proving the unexpected fact that \eqref{lacey} holds for $p_1>1$, $p_2>1$ and $p>2/3$.  In the same way, weighted estimates for the easier operator $M^{\otimes}$ will imply weighted estimates for $\mathcal M$. In particular, using H\"older's inequality,  one can immediately obtain that  
 \begin{equation}\label{mfold}
 M^{\otimes}: L^{p_1}(w_1)\times L^{p_2}(w_2) \longrightarrow L^p\left(w_1^{p/p_1} w_2^{p/p_2}\right), 
 \end{equation}
 for every $p_1, p_2> 1$, $\frac 1p=\frac 1{p_1}+\frac 1{p_2}$, $w_1\in A_{p_1}$ and 
 $w_2\in A_{p_2}$. Consequently, 
 $$
  \mathcal M: L^{p_1}(w_1)\times L^{p_2}(w_2) \longrightarrow L^p\left(w_1^{p/p_1} w_2^{p/p_2}\right), 
 $$
 for every $p_1 >1/\theta$, $p_2 >1/(1-\theta)$ and $w_1 \in A_{\theta p_1}$, $w_2 \in A_{(1-\theta)p_2}$. It is worth mentioning that much more delicate weighted estimates  for the  bilinear Hilbert transform have  been  recently obtained in \cite{diplinio}.

 \noindent
 2)  Let now $T$  be a bilinear Calder\'on-Zygmund operator;  that is, for every $f,g\in C^\infty_c$,  
 $$
 T(f,g)(x)=\int _{\mathbb R^{2n}} f(y_1) g(y_2) K(x, y_1, y_2) dy_1 dy_2, \quad \forall x\notin \mbox{supp }f\cap \mbox{supp }g, 
 $$
where $K$ is defined away from the diagonal $x=y_1=y_2$, satisfies the size estimate
 $$
 |K(y_0, y_1, y_2)| \lesssim \frac 1{(\sum_{k,l} |y_k-y_l|)^{2n}},
 $$
 and for some $\varepsilon>0$ it satisfies the regularity condition
 $$
 | K(y_0, y_1, y_2)-K(y_0, y_1', y_2)|\lesssim \frac{|y_1-y_1'|^\varepsilon}{(\sum_{k,l} |y_k-y_l|)^{2n+\varepsilon}},
 $$
 and similarly for $| K(y_0, y_1, y_2)-K(y_0, y_1,  y_2')|$. Let $T^*$ be its maximal truncated operator, defined by
 $$
 T^*(f,g)(x)=\sup_{\delta>0}\left|\int _{|x-y_1|^2+|x-y_2|^2>\delta^2} f(y_1) g(y_2) K(x, y_1, y_2) dy_1 dy_2\right|. 
 $$
 
Then, L. Grafakos and R. H. Torres proved in \cite{graftor}  the following bilinear Cotlar's inequality: for every $\eta>0$, there exists a finite constant $C_{\eta}$ such that for every $(f,g)\in L^{p_1}\times L^{p_2}$, with $1\leq p_1,p_2<\infty$, the following holds for all $x\in \mathbb R^n$:
 $$
 T^*(f,g)(x)\leq C_{\eta}\left( M(|T(f,g)|^\eta)(x)^{1/\eta}+M^{\otimes}(f,g)(x)\right).
 $$
 
 As a consequence, one can deduce that
 $$
 T^*: L^{p_1}(w_1)\times L^{p_2}(w_2) \longrightarrow L^{p}\left(w_1^{p/p_1}w_2^{p/p_2}\right),
 $$
with $w_1 \in A_{p_1}$ and $w_2 \in A_{p_2}$ by proving this estimate for the  easier operators $T$ and  $M^{\otimes}$.  In this setting of bilinear Carder\'on-Zygmund integral operators, many other results have been proved where the role of the operator $M^{\otimes}$ is fundamental (see, for example,  paper \cite{mn:mn} where a good-lambda estimate for a maximal CZ operator with kernel satisfying a Dini condition is proved).

Concerning weighted bounds for $M^{\otimes}$, the above easy exercise \eqref{mfold} becomes an open question when we want to characterize the weights for which 
\begin{equation}\label{openquestion}
 M^{\otimes}: L^{p_1,1}(w_1)\times L^{p_2, 1}(w_2) \longrightarrow L^{p,\infty}\left(w_1^{p/p_1} w_2^{p/p_2}\right), 
\end{equation}
and this is the question we want to address in this paper. In fact, the motivation comes from the recent restricted weak type Rubio de Francia extrapolation theory  (see \cite{cgs:cgs}, \cite{cs:cs}) where it has been proved that if an operator 
$$
T:L^{p,1}(w) \longrightarrow L^{p,\infty}(w), 
$$
for every $w\in A_p^{\mathcal R}$, then endpoint $(1,1)$ estimates  hold for characteristic functions; that is 
$$
\Vert T\chi_E\Vert_{L^{1,\infty}(u)} \lesssim u(E), \qquad\forall u\in A_1, 
$$
contrary to what happens with the classical Rubio de Francia theory. In this context, it has been proved in  \cite{cr:cr} that 
 the operator $ M^{\otimes}$  plays in the multilinear extrapolation theory of Rubio de Francia (see \cite{gm:gm})  the same role that the classical Hardy-Littlewood maximal operator plays  in the linear case of this theory. Therefore, the complete characterization of the weights $w_1$ and  $w_2$ satisfying \eqref{openquestion} becomes a fundamental and  interesting question.

 Obviously, if H\"older's inequality for Lorentz spaces with change of measures holds, as it happens with the $L^p$ spaces, 
\begin{equation}\label{holder}
 \Vert fg \Vert_{L^{p,\infty}\left(w_1^{p/p_1} w_2^{p/p_2}\right)} \le 
C  \Vert f \Vert_{L^{p_1,\infty}(w_1 )}  \Vert g \Vert_{L^{p_2,\infty}(w_2 )}, 
  \end{equation}
  and then, for every $w_j\in A_{p_j}^{\mathcal R}$, 
  $$
  \Vert M^{\otimes}(f,g)\Vert_{L^{p, \infty}(w_1^{p/p_1} w_2^{p/p_2})} \lesssim 
  \Vert Mf \Vert_{L^{p_1,\infty}(w_1 )}  \Vert Mg \Vert_{L^{p_2,\infty}(w_2 )}\lesssim \Vert f \Vert_{L^{p_1,1}(w_1 )}  \Vert g \Vert_{L^{p_2,1}(w_2 )},
  $$
as we expect. This is what happens in the particular case when all the weights are equal,  $w:=w_1=w_2=w_1^{p/p_1} w_2^{p/p_2}$, since 
\begin{eqnarray*}
\Vert fg\Vert_{L^{p,\infty}(w)}&=& \sup_{t>0 }t^{1/p} (fg)^*_{w}(t)   \lesssim  \sup_{t>0 }t^{1/p} (f)^*_{w}(t)  (g)^*_{w}(t)
\le  \Vert f   \Vert_{L^{p_1,\infty}(w)}\Vert g \Vert_{L^{p_2,\infty}(w)}.
\end{eqnarray*}
As a consequence:

\begin{theorem} If   $w \in A_{\min{\{p_1, p_2\}}}^{\mathcal R}$, then
$$
 M^{\otimes}: L^{p_1,1}(w)\times L^{p_2, 1}(w) \longrightarrow L^{p,\infty}(w).
$$
\end{theorem}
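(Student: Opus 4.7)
The plan is to combine three ingredients, two of which are already displayed in the excerpt. First, the calculation immediately preceding the theorem establishes Hölder's inequality for weak Lorentz spaces with a common weight: for $1/p=1/p_1+1/p_2$,
\begin{equation*}
\|fg\|_{L^{p,\infty}(w)} \;\lesssim\; \|f\|_{L^{p_1,\infty}(w)}\,\|g\|_{L^{p_2,\infty}(w)},
\end{equation*}
obtained by writing $t^{1/p}(fg)^*_w(t) \lesssim (t^{1/p_1} f^*_w(t))(t^{1/p_2} g^*_w(t))$. Applying this to the pair $(Mf, Mg)$ gives
\begin{equation*}
\|M^{\otimes}(f,g)\|_{L^{p,\infty}(w)} \;\lesssim\; \|Mf\|_{L^{p_1,\infty}(w)}\,\|Mg\|_{L^{p_2,\infty}(w)}.
\end{equation*}

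Second, I invoke the Chung--Hunt--Kurtz / Kerman--Torchinsky characterization recalled in the introduction: for each $j=1,2$, one has $M:L^{p_j,1}(w)\to L^{p_j,\infty}(w)$ with norm controlled by $[w]_{A_{p_j}^{\mathcal R}}$, provided $w\in A_{p_j}^{\mathcal R}$. Combining the two displays yields
\begin{equation*}
\|M^{\otimes}(f,g)\|_{L^{p,\infty}(w)} \;\lesssim\; \|f\|_{L^{p_1,1}(w)}\,\|g\|_{L^{p_2,1}(w)},
\end{equation*}
which is exactly the conclusion of the theorem, once we know that the hypothesis $w\in A_{\min\{p_1,p_2\}}^{\mathcal R}$ guarantees membership in both $A_{p_1}^{\mathcal R}$ and $A_{p_2}^{\mathcal R}$.

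The only nonroutine point is therefore the monotonicity of the restricted Muckenhoupt classes: $A_q^{\mathcal R}\subseteq A_r^{\mathcal R}$ whenever $q\le r$. This is the analogue of the classical nesting $A_q\subseteq A_r$ for Muckenhoupt weights and follows directly from the definition by estimating $\|\chi_Q w^{-1}\|_{L^{r',\infty}(w)}$ in terms of $\|\chi_Q w^{-1}\|_{L^{q',\infty}(w)}$ and the factor $w(Q)^{1/q-1/r}$; assuming $p_1\le p_2$, membership in $A_{p_1}^{\mathcal R}$ therefore gives membership in $A_{p_2}^{\mathcal R}$ as well, closing the argument. I expect no further obstacle, since the deep ingredient, namely the single-weight Hölder inequality in weak Lorentz spaces, is handled by the elementary decreasing rearrangement computation and the case splitting $\min\{p_1,p_2\}$ absorbs the two-sided use of the Kerman--Torchinsky bound.
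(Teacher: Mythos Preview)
Your proposal is correct and follows essentially the same route as the paper: the theorem is stated there as an immediate consequence of the single-weight H\"older inequality $\|fg\|_{L^{p,\infty}(w)}\lesssim\|f\|_{L^{p_1,\infty}(w)}\|g\|_{L^{p_2,\infty}(w)}$ combined with the restricted weak-type bound $M:L^{p_j,1}(w)\to L^{p_j,\infty}(w)$ for $w\in A_{p_j}^{\mathcal R}$. The only point you add beyond the paper's (implicit) argument is the verification of the nesting $A_q^{\mathcal R}\subseteq A_r^{\mathcal R}$ for $q\le r$, and your sketch of that is fine.
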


However, we shall prove in Section \ref{S-holder} that \eqref{holder} does not hold for general weights, and hence the complete solution to \eqref{openquestion} remains open.

On the other hand,  despite of the fact that \eqref{holder} is not true, the following more intriguing and difficult result was proved in \cite{loptt:loptt}: for every $w_1, w_2\in A_1$, 
 $$
 M^{\otimes}: L^{ 1}(w_1)\times L^{ 1}(w_2) \longrightarrow L^{1/2,\infty}\left(w_1^{1/2} w_2^{1/2}\right). 
$$
 
Let us just mention here that this is the endpoint of the general case \eqref{openquestion}, which is surprising because usually endpoint estimates are harder to prove than estimates where, for example,  all the spaces involved are Banach spaces. 
  
The paper is organized as follows: we shall devote Section \ref{S-holder} to give a counterexample of \eqref{openquestion} and prove a weaker version of H\"older's inequality for Lorentz spaces with change of measures which shall be fundamental for our main results in Section \ref{S-main} concerning weighted boundedness of $M^{\otimes}$.

Before going on, let us recall the definition of the spaces which are going to be important for us (see \cite{BennettS Book88}).  Given $p>0$ and an arbitrary measure $\nu$, $L^{p,1}(\nu)$ is the Lorentz space of measurable functions such that
$$
||f||_{L^{p,1}(\nu)}
:=p\int_0^\infty \lambda_f^\nu (y) ^{1/p}  dy  =\int_0^\infty  f_\nu^*(t) t^{1/p-1}  dt <\infty,
$$
and $L^{p,\infty}(\nu)$ is the Lorentz space of measurable functions such that
$$
||f||_{L^{p,\infty}(\nu)}:= \sup_{y>0}y  \lambda_f^\nu(y)^{1/p}= \sup_{t>0}t^{1/p}f_\nu^*(t)<\infty, 
$$
where  $f_\nu^*$ is the decreasing rearrangement of $f$ with respect to $\nu$, defined by 
$$
f_\nu^*(t):=\inf\{y>0:\lambda_f^\nu(y)\leq t\}, \qquad \lambda_f^\nu(t):=\nu(\{|f|>t\}).
$$

As usual, we write $A \lesssim B$ if there exists a positive constant $C>0$, independent of $A$ and $B$, such that $A\leq C B$. If $A\lesssim B$ and $B\lesssim A$, then we write $A\approx B$. 

Finally, unless indicated explicitly, we shall always assume that $1\le p_1,p_2<\infty$, although one can take $0<p_i<\infty$ whenever no conflict arises with the definition of the objects involved. Also, $w_1,w_2$ will denote weights on $\mathbb R^n$ and, by definition,  
$$
\frac 1p=\frac 1{p_1}+ \frac 1{p_2} \quad\mbox{and}\quad w=w_1^{p/p_1} w_2^{p/p_2}. 
$$

\section{H\"older's inequality for Lorentz spaces}\label{S-holder}

Let us start giving a counterexample that shows that \eqref{holder} does not hold for general weights. Let us consider $n\geq1$,  $0<p_1, p_2<\infty$ and 
$$
f(x)=\frac 1{|x|^{n/p_1}}\chi_{\{|x|\ge 1\}}, \ g(x)=   {|x|^{n/p_1}}\chi_{\{|x|\ge 1\}}, $$
 $$
 w_1(x)= 1, \ w_2(x)= \frac 1{|x|^{n \big(1+\frac{p_2}{p_1}\big)}} \chi_{\{|x|\ge 1\}}+\chi_{\{|x|<1\}}. 
$$
Then $fg= \chi_{\{|x|\ge 1\}}$ and $w(x)= w_2(x)^{p/p_2}= \frac 1{|x|^{n}} \chi_{\{|x|\ge 1\}}+\chi_{\{|x|<1\}}$,  and hence, 
$$
\Vert fg\Vert_{L^{p,\infty}(w)}^p=  
\int_{\{|x|\ge 1\}} \frac 1{|x|^{n}}   dx=+\infty,
$$
while $\Vert f\Vert_{L^{p_1, \infty}(w_1)}=\Vert f\Vert_{L^{p_1, \infty}}<\infty$,  and 
$$
\Vert g\Vert_{L^{p_2, \infty}(w_2)}\le  \sup_{s>0} s \left(\int_{\{x\in\mathbb R^n: |x|^{n/p_1}>s\}}\frac 1{|x|^{n \left(1+\frac{p_2}{p_1}\right)}}  dx\right)^{1/p_2}<\infty,
$$
and the result follows.

Due to this fact and in order to prove our main estimate for the operator $M^{\otimes}$,  we   need  the following weaker versions of H\"older's inequality.

\begin{lemma}\label{charac}
 Given a measurable set $E$ and a measurable function $g$, 
 $$
\left \| \chi_E g \right \|_{L^{p,\infty}(w)} \leq \left \| \chi_E \right \|_{L^{p_1,\infty}(w_1)} \left \| g \right \|_{L^{p_2,\infty}(w_2)}.
$$
 \end{lemma}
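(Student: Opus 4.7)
My plan is to work directly with the distribution function. Writing $A_y := \{x \in E : |g(x)| > y\}$, the weak Lorentz norm becomes
$$
\|\chi_E g\|_{L^{p,\infty}(w)}^p = \sup_{y>0} y^p\, w(A_y),
$$
so the task reduces to bounding $w(A_y)$ by a product whose factors correspond to the two norms on the right-hand side.

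The key step is Hölder's inequality applied to the weight $w = w_1^{p/p_1}w_2^{p/p_2}$ on the set $A_y$: since $p/p_1 + p/p_2 = 1$, the classical Hölder inequality gives
$$
w(A_y) = \int_{A_y} w_1^{p/p_1} w_2^{p/p_2}\,dx \le w_1(A_y)^{p/p_1}\, w_2(A_y)^{p/p_2}.
$$
From here I would use the two trivial inclusions $A_y \subseteq E$ and $A_y \subseteq \{|g|>y\}$: the first gives $w_1(A_y) \le w_1(E)$, and the second gives $w_2(A_y) \le \lambda_g^{w_2}(y)$. Multiplying the resulting bound by $y^p = (y^{p_2})^{p/p_2}$ produces
$$
y^p\, w(A_y) \le w_1(E)^{p/p_1}\bigl(y^{p_2}\lambda_g^{w_2}(y)\bigr)^{p/p_2},
$$
and taking the supremum in $y$ yields
$$
\|\chi_E g\|_{L^{p,\infty}(w)}^p \le w_1(E)^{p/p_1}\, \|g\|_{L^{p_2,\infty}(w_2)}^p.
$$

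To finish, I would use the standard identity for characteristic functions in weak Lorentz spaces: since $(\chi_E)^*_{w_1}(t) = \chi_{[0,w_1(E))}(t)$, one has $\|\chi_E\|_{L^{p_1,\infty}(w_1)} = w_1(E)^{1/p_1}$. Substituting and taking $p$-th roots gives the desired inequality. The argument is essentially a one-line application of Hölder once the level-set formulation is set up; the only mildly delicate point is matching the exponents $p/p_1$ and $p/p_2$ in the factorization of $w$ with the exponent $p$ in the target space, which works out exactly because $1/p = 1/p_1 + 1/p_2$.
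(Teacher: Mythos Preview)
Your proof is correct and follows essentially the same approach as the paper's: both identify the level set $\{|\chi_E g|>y\}=E\cap\{|g|>y\}$, apply H\"older's inequality with exponents $p_1/p$ and $p_2/p$ to split $w=w_1^{p/p_1}w_2^{p/p_2}$, use the two inclusions to bound $w_1$ by $w_1(E)$ and $w_2$ by $\lambda_g^{w_2}(y)$, and take the supremum in $y$. The only cosmetic difference is that you work with $p$-th powers while the paper carries the $1/p$-th power throughout.
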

\begin{proof}
If the right-hand side is infinite, then there is nothing to prove, so we may assume that $w_1(E)<\infty$ and $\left \| g \right \|_{L^{p_2,\infty}(w_2)}<\infty$. 
Now, for every $t>0$,  we have that $\{\chi_E |g| >t\}=E\cap\{|g|>t\}$ and hence,  by H\"older's inequality,
\begin{align*}
t w(\{\chi_E |g| >t\})^{1/p} & \leq t w_1(\{\chi_E |g| >t\})^{1/p_1} w_2(\{\chi_E |g| >t\})^{1/p_2}
\\ & \leq t w_1(E)^{1/p_1} w_2(\{ |g| >t\})^{1/p_2},
\end{align*}
from which the result follows taking the supremum in $t>0$.
\end{proof}

\begin{lemma}\label{dholder2}
Given measurable functions $f$ and $g$, with $\left \| f \right \|_{\infty}\leq 1$, and $0<\delta <1$, we have that
$$
\left \| fg \right \|_{L^{p,\infty}(w)} \le C(p,\delta) \left \| f^{\delta} \right \|_{L^{p_1,\infty}(w_1)} \left \| g \right \|_{L^{p_2,\infty}(w_2)}.
$$
\end{lemma}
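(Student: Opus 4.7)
My plan is to bound the distribution function $w(\{|fg|>t\})$ directly for each $t>0$ by a dyadic decomposition in the size of $|g|$. The key observation is that $\Vert f\Vert_\infty\le 1$ forces $|fg|\le |g|$, so $\{|fg|>t\}\subseteq\{|g|>t\}$, while on the $k$-th dyadic slice of $|g|$ the constraint $|fg|>t$ transfers into a pointwise lower bound $|f|^\delta>2^{-\delta(k+1)}$. The hypothesis $\delta<1$ will supply the geometric decay needed to sum the slices.

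Concretely, fix $t>0$ and set $B_k:=\{2^kt<|g|\le 2^{k+1}t\}$ for $k\ge 0$, so that $\{|fg|>t\}\subseteq\bigcup_{k\ge 0}B_k$. On $B_k\cap\{|fg|>t\}$ one has $|g|\le 2^{k+1}t$, hence $|f|>2^{-k-1}$ and $|f|^\delta>2^{-\delta(k+1)}$. Writing $w=w_1^{p/p_1}w_2^{p/p_2}$ and applying Hölder's inequality with exponents $p_1/p,\,p_2/p$ to the integral defining $w$ on this set gives
$$w\bigl(B_k\cap\{|fg|>t\}\bigr)\le w_1\bigl(\{|f|^\delta>2^{-\delta(k+1)}\}\bigr)^{p/p_1}\,w_2(B_k)^{p/p_2}.$$
The weak-type quasi-norms bound the first factor by $2^{\delta p(k+1)}\Vert f^\delta\Vert_{L^{p_1,\infty}(w_1)}^p$ and the second by $(2^kt)^{-p}\Vert g\Vert_{L^{p_2,\infty}(w_2)}^p$. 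Their product equals $2^{\delta p}\,2^{(\delta-1)pk}\,t^{-p}\,\Vert f^\delta\Vert_{L^{p_1,\infty}(w_1)}^p\Vert g\Vert_{L^{p_2,\infty}(w_2)}^p$, and summing the geometric series (convergent precisely because $\delta<1$) yields
$$t^p\,w(\{|fg|>t\})\le\frac{2^{\delta p}}{1-2^{(\delta-1)p}}\Vert f^\delta\Vert_{L^{p_1,\infty}(w_1)}^p\Vert g\Vert_{L^{p_2,\infty}(w_2)}^p.$$
Taking the supremum in $t$ finishes the proof, with $C(p,\delta)^p=2^{\delta p}/(1-2^{(\delta-1)p})$.

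The subtle point, and where a naive approach fails, lies in choosing the correct direction for the dyadic decomposition. Decomposing in level sets of $|f|^\delta$ would force one to estimate $w_2(\{|f|^\delta>s\})$, for which no hypothesis is available—indeed, that set can have infinite $w_2$-measure, as the counterexample to \eqref{holder} given at the start of this section shows. Splitting instead in the level sets of $|g|$ pairs each factor with its natural weight, and exploits the pointwise constraint $|fg|>t$ only in the direction where it gives information, namely a lower bound on $|f|^\delta$. Finally, observe that $C(p,\delta)\to\infty$ as $\delta\to 1^-$, which is consistent with the failure of the genuine Hölder inequality \eqref{holder}.
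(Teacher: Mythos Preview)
Your proof is correct and takes a genuinely different route from the paper's. The paper decomposes in dyadic level sets of $|f|$ rather than $|g|$, and because such a slice gives no direct control on the $w$-measure of $\{|fg|>t\}$, it detours through Kolmogorov's inequality: one passes to an auxiliary $L^q(w)$ norm with $0<q<p$, bounds $\|fg\chi_A\|_{L^q(w)}^q$ by a geometric sum of terms $2^{k(1-\delta)q}\bigl(2^{\delta k}\|\chi_{\{|f|^\delta>2^{\delta k}\}}g\chi_A\|_{L^q(w)}\bigr)^q$, applies Kolmogorov in reverse, and finally invokes Lemma~\ref{charac}. Your argument avoids all of this machinery by working directly with the distribution function and pairing each dyadic $g$-slice with its natural weight via the elementary H\"older bound $w(A)\le w_1(A)^{p/p_1}w_2(A)^{p/p_2}$. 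This is more elementary and also yields a sharper explicit constant: your $C(p,\delta)=\bigl(2^{\delta p}/(1-2^{(\delta-1)p})\bigr)^{1/p}$ blows up only like $(1-\delta)^{-1/p}$ as $\delta\to 1^-$, improving on the rates $\lesssim(1-\delta)^{-1}$ (for $p>1$) and $\lesssim(1-\delta)^{-1-\alpha}$ (for $p\le 1$) recorded after the paper's proof.

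One small remark on your closing commentary: the claim that decomposing in level sets of $|f|^\delta$ ``would force one to estimate $w_2(\{|f|^\delta>s\})$'' is not quite right. The same direct distribution-function argument you used also works when slicing in $|f|$ (with $k<0$), since on $\{2^k<|f|\le 2^{k+1}\}\cap\{|fg|>t\}$ one has $|g|>t\,2^{-(k+1)}$, and the resulting geometric series $\sum_{k<0}2^{(1-\delta)pk}$ again converges. The paper's need for Kolmogorov stems from its choice to route the estimate through Lemma~\ref{charac}, not from any intrinsic obstruction in the $f$-direction.
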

\begin{proof}

Let $F$ be a measurable function with $\left \| F \right \|_{\infty} \leq 1$. Then, by Lemma~\ref{charac}, we have that 
\begin{eqnarray*}
\sup_{0<t<1} t \left \| \chi_{\{|F|>t\}} g \right \|_{L^{p,\infty}(w)}& \leq &\sup_{0<t<1} t \left \| \chi_{\{|F|>t\}} \right \|_{L^{p_1,\infty}(w_1)} \left \| g \right \|_{L^{p_2,\infty}(w_2)}
\\
&\le&  \left \| F \right \|_{L^{p_1,\infty}(w_1)} \left \| g \right \|_{L^{p_2,\infty}(w_2)}.
\end{eqnarray*}

 Let $0<\delta <1$, set $F=f^{\delta}$ and fix  $0<q<p$. By Kolmogorov's inequality (see \cite{gz:gz} or   \cite[Ex.~1.1.12]{grafclas}),
$$
\left \| f g \right \|_{L^{p,\infty}(w)} \leq \sup_{0<w(A)<\infty} \left \| fg\chi_A \right \|_{L^{q}(w)}w(A)^{1/p-1/q},
$$ 
where the supremum is taken over all measurable sets $A$ with $0<w(A)<\infty$. For one of such sets $A$, we have that
\begin{align*}
\left \| fg\chi_A \right \|_{L^{q}(w)}^q & =\sum_{k<0} \int_{A \cap \{2^k < |f| \leq 2^{k+1}\}} |fg|^q w \leq 2^q \sum_{k<0} 2^{kq} \left \| \chi_{\{|f|>2^k\}} g\chi_A \right \|_{L^{q}(w)} ^q
\\ &  \leq 2^q \sum_{k<0} 2^{k(1-\delta)q}    \bigg( 2^{\delta k}  \left \| \chi_{\{|f|^\delta >2^{\delta k}\}} g\chi_A \right \|_{L^{q}(w)} \bigg)^q
\\ & \leq \frac{2^q}{2^{(1-\delta)q}-1} \left( \sup_{0<t<1} t \left \| \chi_{\{|f|^{\delta}>t\}} g\chi_A \right \|_{L^{q}(w)} \right)^q,
\end{align*}  
and hence,  applying Kolmogorov's inequality again,
\begin{align*}
& \sup_{0<w(A)<\infty} \left \| fg\chi_A \right \|_{L^{q}(w)}w(A)^{1/p-1/q} \\ &
\leq 2 (2^{(1-\delta)q}-1)^{-1/q} \sup_{0<t<1} t \sup_{0<w(A)<\infty} \left \| \chi_{\{|f|^{\delta}>t\}} g\chi_A \right \|_{L^{q}(w)} w(A)^{1/p-1/q} \\ & \leq 2 (2^{(1-\delta)q}-1)^{-1/q} \left(\frac{p}{p-q}\right)^{1/q}\sup_{0<t<1} t  \left \| \chi_{\{|f|^{\delta}>t\}} g \right \|_{L^{p,\infty}(w)} \\ &  \leq 2 (2^{(1-\delta)q}-1)^{-1/q} \left(\frac{p}{p-q}\right)^{1/q} \left \| f^{\delta} \right \|_{L^{p_1,\infty}(w_1)} \left \| g \right \|_{L^{p_2,\infty}(w_2)}
\\
&\leq 2 \left(\frac{p}{\log 2(1-\delta)q(p-q)} \right)^{1/q} \left \| f^{\delta} \right \|_{L^{p_1,\infty}(w_1)} \left \| g \right \|_{L^{p_2,\infty}(w_2)}.
\end{align*}

Hence, the theorem follows taking 
\begin{align*}
C(p,\delta)&:= \inf_{0<q<p} 2 \left(\frac{p}{\log 2(1-\delta)q(p-q)} \right)^{1/q} = 2 \left(\inf_{0<\theta<1}  \left(\log 2(1-\delta)p\theta(1-\theta) \right)^{-1/\theta}\right)^{1/p}.
\end{align*}
\end{proof}

Observe that if $p>1$, then $C(p,\delta) \lesssim_p \frac{1}{1-\delta}$, and if $p\leq 1$, then $C(p,\delta) \lesssim_{p,\alpha} \frac{1}{(1-\delta)^{1+\alpha}}$, for every $\alpha>-1/p'$.

\section{Main results}\label{S-main}

For our first lemma,  recall  that the class $A_\infty$ is simply defined by $A_\infty:=\bigcup_{p\ge 1}A_p$. See also the recent paper \cite{dk:dk} for related results. 

\begin{lemma}\label{cubes}
If $w_1,w_2\in A_{\infty}$, then for every cube $Q$,
\begin{equation}\label{cubosr}
w_1(Q)^{1/p_1}w_2(Q)^{1/p_2} \approx w(Q)^{1/p}.
\end{equation}
\end{lemma}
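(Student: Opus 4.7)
The direction $w(Q)^{1/p} \le w_1(Q)^{1/p_1}w_2(Q)^{1/p_2}$ is immediate and does not require any $A_\infty$ assumption. Writing $w = w_1^{p/p_1} w_2^{p/p_2}$ and noting that $p/p_1$ and $p/p_2$ are H\"older conjugate exponents (their reciprocals $p_1/p$ and $p_2/p$ sum to $1$), H\"older's inequality gives
$$
w(Q) = \int_Q w_1^{p/p_1} w_2^{p/p_2}\,dx \le \left(\int_Q w_1\right)^{p/p_1} \left(\int_Q w_2\right)^{p/p_2},
$$
and raising to the power $1/p$ produces the desired upper bound.

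For the reverse inequality, the plan is to invoke the well-known characterization of $A_\infty$ as the class of weights for which the arithmetic and geometric means over cubes are comparable: $w \in A_\infty$ if and only if there exists $C>0$ such that for every cube $Q$,
$$
\frac{1}{|Q|}\int_Q w\,dx \le C\, \exp\!\left(\frac{1}{|Q|}\int_Q \log w\,dx\right).
$$
This is the so-called reverse Jensen inequality, and it is the only nontrivial input needed. One could equivalently use the reverse H\"older inequality, but the Jensen-type formulation makes the argument essentially one line.

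Applying this characterization to $w_1$ and $w_2$ separately, raising to the powers $p/p_1$ and $p/p_2$, and multiplying gives
$$
\left(\frac{w_1(Q)}{|Q|}\right)^{p/p_1} \left(\frac{w_2(Q)}{|Q|}\right)^{p/p_2} \lesssim \exp\!\left(\frac{1}{|Q|}\int_Q \log\bigl(w_1^{p/p_1} w_2^{p/p_2}\bigr)\,dx\right) = \exp\!\left(\frac{1}{|Q|}\int_Q \log w\,dx\right).
$$
The right-hand side is the geometric mean of $w$ over $Q$, and by the standard (forward) Jensen inequality it is bounded by the arithmetic mean $w(Q)/|Q|$. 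Since the exponents $p/p_1$ and $p/p_2$ add up to $1$, the factors $|Q|$ in the denominators cancel, yielding $w_1(Q)^{p/p_1} w_2(Q)^{p/p_2} \lesssim w(Q)$, which after taking $1/p$-th powers is exactly the inequality sought.

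There is no real obstacle here: the entire proof hinges on remembering the reverse Jensen characterization of $A_\infty$, after which the chain ``reverse Jensen on each factor $\Rightarrow$ collect the logarithms $\Rightarrow$ forward Jensen on the product'' closes the gap automatically. Hidden constants are of course controlled by $[w_1]_{A_\infty}$ and $[w_2]_{A_\infty}$.
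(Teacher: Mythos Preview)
Your proof is correct. The forward inequality via H\"older is immediate, and the reverse inequality via the Hru\v{s}\v{c}ev (reverse Jensen) characterization of $A_\infty$ together with forward Jensen is clean and entirely valid.

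The paper takes a different but related route: instead of the reverse Jensen characterization, it invokes the reverse H\"older structure of $A_\infty$. Specifically, the authors cite a result from Cruz-Uribe--Neugebauer stating that $w_i\in A_\infty$ implies $w_i^{p/p_i}\in RH_{p_i/p}$, and then a further structural theorem from the same paper to conclude the comparability. Your argument is arguably more self-contained and transparent, since it avoids appealing to external theorems about the lattice structure of reverse H\"older classes and instead reduces everything to one well-known characterization of $A_\infty$ plus two applications of Jensen. Both proofs, however, ultimately rest on the same underlying fact: $A_\infty$ weights satisfy a quantitative reverse of the arithmetic--geometric mean inequality over cubes.
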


\begin{proof}
Since $w_1,w_2\in A_{\infty}$,   we have by Theorem 2.1 in \cite{cn:cn} that  $w_i ^{p/p_i} \in RH_{p_i/p}$, $i=1,2$, and the result follows by Theorem 2.6 in the same paper. 
\end{proof}

\begin{proposition}\label{apr}
If $w_1, w_2\in A_{\infty}$,  the following statements are equivalent:
\begin{enumerate}
\item $w_2 \in A_{p_2}^{\mathcal R}$.
\item For every measurable set $E$ and every measurable function $g$,
$$
\left \| \chi_E Mg \right \|_{L^{p,\infty}(w)} \lesssim w_1(E)^{1/p_1} \left \| g \right \|_{L^{p_2,1}(w_2)}. 
$$
\item For every cube $Q$ and every measurable function $g$,
$$
\left \| \chi_Q Mg \right \|_{L^{p,\infty}(w)} \lesssim w_1(Q)^{1/p_1} \left \| g \right \|_{L^{p_2,1}(w_2)}. 
$$
\end{enumerate}
\end{proposition}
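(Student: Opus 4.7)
My plan is to prove the three statements equivalent by establishing the cycle $(1) \Rightarrow (2) \Rightarrow (3) \Rightarrow (1)$. The implication $(2) \Rightarrow (3)$ is trivial, since a cube is a measurable set, so the real content lies in the two remaining arrows.

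The implication $(1) \Rightarrow (2)$ should be essentially immediate from Lemma~\ref{charac}: applying it with $Mg$ in the role of $g$ gives
\begin{equation*}
\left\| \chi_E Mg \right\|_{L^{p,\infty}(w)} \le \left\| \chi_E \right\|_{L^{p_1,\infty}(w_1)} \left\| Mg \right\|_{L^{p_2,\infty}(w_2)} = w_1(E)^{1/p_1} \left\| Mg \right\|_{L^{p_2,\infty}(w_2)},
\end{equation*}
and then the assumed restricted weak type bound $M : L^{p_2,1}(w_2) \to L^{p_2,\infty}(w_2)$, valid because $w_2 \in A_{p_2}^{\mathcal R}$ (as recalled in the introduction via the Kerman--Torchinsky theorem), absorbs the remaining factor.

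For $(3) \Rightarrow (1)$, which is the substantive direction, I would test $(3)$ with $g = \chi_E$, where $E$ is an arbitrary measurable subset of a cube $Q$. The elementary pointwise bound $M\chi_E \ge |E|/|Q|$ on $Q$ gives a lower bound on the left-hand side proportional to $(|E|/|Q|) \, w(Q)^{1/p}$, while $\|\chi_E\|_{L^{p_2,1}(w_2)}$ is a constant multiple of $w_2(E)^{1/p_2}$. Combining these with Lemma~\ref{cubes}, which uses the $A_\infty$ hypothesis to swap $w(Q)^{1/p}$ for $w_1(Q)^{1/p_1} w_2(Q)^{1/p_2}$, the factor $w_1(Q)^{1/p_1}$ cancels and I am left with
\begin{equation*}
\frac{|E|}{|Q|} \lesssim \left( \frac{w_2(E)}{w_2(Q)} \right)^{1/p_2}, \qquad E \subseteq Q,
\end{equation*}
which is the familiar restricted weak type characterization of the class $A_{p_2}^{\mathcal R}$.

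I do not foresee any serious obstacle. The entire argument is really a packaging exercise around the two lemmas proved earlier: Lemma~\ref{charac} is precisely what makes $(1) \Rightarrow (2)$ painless, bypassing the failure of the full H\"older inequality \eqref{holder}, and Lemma~\ref{cubes} is what lets the reverse test-function computation reduce a condition involving the product weight $w$ to one involving only $w_2$. The $A_\infty$ hypothesis on both weights enters the proof only through Lemma~\ref{cubes}, and would be the first thing to revisit if one wanted to relax the assumptions.
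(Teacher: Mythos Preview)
Your argument is correct and follows the same cycle $(1)\Rightarrow(2)\Rightarrow(3)\Rightarrow(1)$ as the paper, invoking Lemma~\ref{charac} and Lemma~\ref{cubes} at exactly the same points. The only difference is in $(3)\Rightarrow(1)$: you test on $g=\chi_E$ with $E\subseteq Q$ and land on the Kerman--Torchinsky level-set form $|E|/|Q|\lesssim(w_2(E)/w_2(Q))^{1/p_2}$ of the $A_{p_2}^{\mathcal R}$ condition, whereas the paper instead uses $L^{p_2,1}$--$L^{p_2',\infty}$ duality to pick a near-extremizing $g\ge 0$ with $\|g\|_{L^{p_2,1}(w_2)}\le 1$ and $\|\chi_Q w_2^{-1}\|_{L^{p_2',\infty}(w_2)}\lesssim\int_Q g$, so that the same computation recovers directly the quantity $[w_2]_{A_{p_2}^{\mathcal R}}$ as defined in the introduction.
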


\begin{proof}
2) follows from 1) applying Lemma~\ref{charac}, and it is clear that 3) follows from 2). Let us show that 3) implies 1). Fix a cube $Q$ and, using duality, choose a non-negative $g$ such that $\left \| g \right \|_{L^{p_2,1}(w_2)} \leq 1$ and
$$
 \left \| \chi_Q w_2 ^{-1} \right \|_{L^{p'_2,\infty}(w_2)} \lesssim \int g (\chi_Q w_2^{-1}) w_2=\int _Q g.
$$

Since $Mg\geq \left( \dashint_Q g\right)\chi_Q$, 3) implies that
$$
  \frac{w(Q)^{1/p}}{|Q|}\left \| \chi_Q w_2 ^{-1} \right \|_{L^{p'_2,\infty}(w_2)} \lesssim \left( \dashint _Q g\right) w(Q)^{1/p} \lesssim w_1(Q)^{1/p_1}.
$$

Applying Lemma~\ref{cubes}, we get that
$$
\frac{w_2(Q)^{1/p_2}}{|Q|}\left \| \chi_Q w_2 ^{-1} \right \|_{L^{p'_2,\infty}(w_2)}\lesssim 1,
$$
and taking the supremum over all cubes $Q$, we get that $w_2\in A_{p_2}^{\mathcal R}$.
\end{proof}

As a consequence we obtain our first main result. 

\begin{theorem}
If  $w_1,w_2\in A_{\infty}$ and \eqref{openquestion} holds, then 
$w_i\in A_{p_i}^{\mathcal R}$, $i=1,2$.
\end{theorem}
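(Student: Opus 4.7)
The plan is to deduce the statement directly from Proposition~\ref{apr}, without any additional machinery. By the symmetry of the hypothesis \eqref{openquestion} under interchanging $(p_1,w_1)\leftrightarrow(p_2,w_2)$, it suffices to prove $w_2\in A_{p_2}^{\mathcal R}$; the case of $w_1$ will follow by repeating the argument with the two indices swapped.

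First I would fix an arbitrary cube $Q$ and a measurable function $g$, and exploit the elementary pointwise inequality $M\chi_Q\ge\chi_Q$, which holds because $Q$ itself is among the cubes considered in the supremum defining $M\chi_Q(x)$ for $x\in Q$. This trivial observation is the only geometric input we need; it gives
$$
\chi_Q(x)\,Mg(x)\;\le\;M\chi_Q(x)\,Mg(x)\;=\;M^{\otimes}(\chi_Q,g)(x).
$$
Next, I would plug $f=\chi_Q$ into the assumed boundedness \eqref{openquestion} and use the standard formula $\|\chi_Q\|_{L^{p_1,1}(w_1)}=p_1\,w_1(Q)^{1/p_1}$ (obtained by integrating the decreasing rearrangement $(\chi_Q)_{w_1}^{\ast}=\chi_{[0,w_1(Q))}$ against $t^{1/p_1-1}$). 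Combined with the previous pointwise bound this yields
$$
\|\chi_Q\,Mg\|_{L^{p,\infty}(w)}\;\le\;\|M^{\otimes}(\chi_Q,g)\|_{L^{p,\infty}(w)}\;\lesssim\;w_1(Q)^{1/p_1}\,\|g\|_{L^{p_2,1}(w_2)},
$$
which is precisely condition~(3) of Proposition~\ref{apr}.

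Finally, since $w_1,w_2\in A_{\infty}$ by hypothesis, Proposition~\ref{apr} applies and delivers $w_2\in A_{p_2}^{\mathcal R}$. Running the identical argument with the roles of the two components exchanged (i.e.\ testing \eqref{openquestion} against $g=\chi_Q$ and bounding $\chi_Q\,Mf\le M\chi_Q\,Mf$) yields $w_1\in A_{p_1}^{\mathcal R}$. There is no real obstacle in this proof: the $A_\infty$ hypothesis and Lemma~\ref{cubes} are already absorbed inside Proposition~\ref{apr}, and the remaining step is essentially the reduction $M\chi_Q\ge\chi_Q$, which converts the bilinear bound \eqref{openquestion} into the localized linear-looking estimate needed to trigger the proposition.
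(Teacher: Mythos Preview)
Your proof is correct and follows essentially the same approach as the paper: both use the pointwise bound $\chi_E\le M\chi_E$ to turn the bilinear hypothesis \eqref{openquestion} into the localized estimate appearing in Proposition~\ref{apr}, and then invoke that proposition (together with the $A_\infty$ assumption) to conclude. The only cosmetic difference is that the paper verifies condition~(2) of Proposition~\ref{apr} (general measurable sets $E$) while you verify condition~(3) (cubes $Q$), which is immaterial since the proposition states their equivalence.
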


\begin{proof} Since 
 $\chi_E\le M\chi_E  $, we have that 
$$
\left \| \chi_E Mf_2 \right \|_{L^{p,\infty}(w)}\le \left \| M\chi_E Mf_2 \right \|_{L^{p,\infty}(w)}= \left \| M^{\otimes}(\chi_E, f_2) \right \|_{L^{p,\infty}(w)}  \lesssim w_1(E)^{1/p_1} \left \| f_2 \right \|_{L^{p_2,1}(w_2)},
$$
and similarly,
$$
\left \| \chi_E Mf_1 \right \|_{L^{p,\infty}(w)} \le \left \|  Mf_1M\chi_E\right \|_{L^{p,\infty}(w)}= \left \| M^{\otimes}(f_1, \chi_E) \right \|_{L^{p,\infty}(w)}   \lesssim w_2(E)^{1/p_2} \left \| f_1 \right \|_{L^{p_1,1}(w_1)}.  
$$
The desired result follows from Proposition~\ref{apr}.
\end{proof}

Observe that, in fact, the hypotheses that $w_1,w_2\in A_{\infty}$   can be replaced by \eqref{cubosr}. 

We believe that the converse of the previous result holds. However, up to now, we need to assume some stronger condition in one of the weights;  namely, either  $w_1\in A_{p_1}$ or  $w_2\in A_{p_2}$.

\begin{theorem}
Let $p_1 >1$ and let $w_1 \in A_{p_1}$ and $w_2 \in A_{p_2}^{\mathcal R}$. Then, for every measurable set $E$ and every measurable function $g$, 
$$
\left \| M^{\otimes}(\chi_E, g)   \right \|_{L^{p,\infty}(w)} \lesssim \left \| \chi_E \right \|_{L^{p_1,1}(w_1)}\left \| g \right \|_{L^{p_2,1}(w_2)}.
$$
\end{theorem}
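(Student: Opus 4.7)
The plan is to factor $M^{\otimes}(\chi_E,g) = M\chi_E \cdot Mg$ and exploit the crucial observation that $\|M\chi_E\|_{\infty}\le 1$, which puts us in position to apply the substitute H\"older inequality, Lemma~\ref{dholder2}. Concretely, with $f:=M\chi_E$ (so $\|f\|_\infty \le 1$) and $g$ replaced by $Mg$, Lemma~\ref{dholder2} yields, for any $0<\delta<1$,
$$
\|M^{\otimes}(\chi_E,g)\|_{L^{p,\infty}(w)} \le C(p,\delta)\,\|(M\chi_E)^{\delta}\|_{L^{p_1,\infty}(w_1)}\,\|Mg\|_{L^{p_2,\infty}(w_2)}.
$$

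For the second factor, I invoke the hypothesis $w_2\in A_{p_2}^{\mathcal R}$, which gives $M: L^{p_2,1}(w_2)\to L^{p_2,\infty}(w_2)$, so $\|Mg\|_{L^{p_2,\infty}(w_2)}\lesssim \|g\|_{L^{p_2,1}(w_2)}$. For the first factor, a change of variables in the quasi-norm shows
$$
\|(M\chi_E)^{\delta}\|_{L^{p_1,\infty}(w_1)} = \|M\chi_E\|_{L^{\delta p_1,\infty}(w_1)}^{\delta}.
$$
This is the point where the stronger hypothesis $w_1\in A_{p_1}$ (rather than just $A_{p_1}^{\mathcal R}$) enters in an essential way. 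By the self-improvement/openness property of the Muckenhoupt classes, since $p_1>1$ there is some $\varepsilon>0$ with $w_1\in A_{p_1-\varepsilon}$. Choosing $\delta\in(1-\varepsilon/p_1,\,1)$ makes $\delta p_1 > p_1 - \varepsilon$, hence $w_1\in A_{\delta p_1}$, and Muckenhoupt's theorem gives the weak-type bound
$$
\|M\chi_E\|_{L^{\delta p_1,\infty}(w_1)} \lesssim \|\chi_E\|_{L^{\delta p_1}(w_1)} = w_1(E)^{1/(\delta p_1)}.
$$
Raising to the $\delta$ power, the right-hand side collapses to $w_1(E)^{1/p_1}\approx \|\chi_E\|_{L^{p_1,1}(w_1)}$ (the Lorentz quasi-norm of a characteristic function reduces to this, up to the factor $p_1$).

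Assembling the three estimates yields
$$
\|M^{\otimes}(\chi_E,g)\|_{L^{p,\infty}(w)} \lesssim w_1(E)^{1/p_1}\,\|g\|_{L^{p_2,1}(w_2)} \approx \|\chi_E\|_{L^{p_1,1}(w_1)}\,\|g\|_{L^{p_2,1}(w_2)},
$$
as desired. I expect the only genuine subtlety to be the choice of $\delta$: the bound from Lemma~\ref{dholder2} blows up as $\delta\to 1^{-}$ (the constant $C(p,\delta)$ behaves like $(1-\delta)^{-1}$ or worse), while the Muckenhoupt estimate only starts to work once $\delta$ is close enough to $1$ to land inside the open interval provided by the self-improvement of $A_{p_1}$. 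Fixing any admissible $\delta$ in this nonempty range yields finite constants and closes the argument; this is also precisely the step that breaks down if one only assumes $w_1\in A_{p_1}^{\mathcal R}$, explaining the authors' remark that the converse direction is still open without the stronger hypothesis.
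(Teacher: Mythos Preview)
Your proof is correct and follows essentially the same route as the paper: apply Lemma~\ref{dholder2} with $f=M\chi_E$, rewrite $\|(M\chi_E)^{\delta}\|_{L^{p_1,\infty}(w_1)}=\|M\chi_E\|_{L^{\delta p_1,\infty}(w_1)}^{\delta}$, use the openness of $A_{p_1}$ to pick $\delta<1$ with $w_1\in A_{\delta p_1}$, and handle the $Mg$ factor via $w_2\in A_{p_2}^{\mathcal R}$. The paper states the existence of such a $\delta$ directly and writes $\|\chi_E\|_{L^{p_1\delta,1}(w_1)}^{\delta}$ rather than $\|\chi_E\|_{L^{\delta p_1}(w_1)}$, but for characteristic functions these coincide up to constants, so the arguments are the same.
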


\begin{proof}  Since $w_1 \in A_{p_1}$ and $p_1>1$, there exists $0<\delta < 1$ such that $w_1 \in A_{p_1 \delta}$. Applying Lemma~\ref{dholder2}, we obtain that
\begin{align*}
\left \| M^{\otimes}(\chi_E, g) \right \|_{L^{p,\infty}(w)} & \lesssim \left \| M\chi_E  \right \|_{L^{p_1 \delta,\infty}(w_1)} ^{\delta} \left \|  Mg \right \|_{L^{p_2,\infty}(w_2)} \\ & \lesssim \left \| \chi_E \right \|_{L^{p_1 \delta, 1}(w_1)}^{\delta} \left \| g \right \|_{L^{p_2,1}(w_2)} \approx \left \| \chi_E \right \|_{L^{p_1,1}(w_1)} \left \| g \right \|_{L^{p_2,1}(w_2)}.
\end{align*}
\end{proof}

Observe that if $p>1$,  this result can be extended to arbitrary measurable functions $f$ and $g$ by using that $L^{p,\infty}(w)$ is a Banach space.

\begin{theorem} If  $p, p_1>1$,  $w_1 \in A_{p_1}$ and $w_2 \in A_{p_2}^{\mathcal R}$ (or 
  $p, p_2>1$,  $w_1 \in A_{p_1}^{\mathcal R}$ and $w_2 \in A_{p_2}$), 
then 
$$
 M^{\otimes}: L^{p_1,1}(w_1)\times L^{p_2, 1}(w_2) \longrightarrow L^{p,\infty}\left(w_1^{p/p_1} w_2^{p/p_2}\right). 
$$
\end{theorem}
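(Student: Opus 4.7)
The plan is to bootstrap the set-version Theorem~3.3 to arbitrary $f$ by a layer-cake decomposition, exploiting that for $p>1$ the Lorentz space $L^{p,\infty}(w)$ admits an equivalent Banach norm (for instance the one obtained via duality with $L^{p',1}(w)$), so that the triangle inequality holds up to an absolute constant depending only on $p$.

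Without loss of generality I assume $f\ge 0$ and set $E_k:=\{x:2^k<f(x)\le 2^{k+1}\}$ for $k\in\mathbb Z$. The shells $\{E_k\}$ form a disjoint decomposition of $\{f>0\}$, so $f\le\sum_{k\in\mathbb Z}2^{k+1}\chi_{E_k}$. Sublinearity of $M$ yields the pointwise bound $Mf\le\sum_{k}2^{k+1}M\chi_{E_k}$, and multiplying by $Mg$ gives
$$
M^{\otimes}(f,g)\;\le\;\sum_{k\in\mathbb Z}2^{k+1}M^{\otimes}(\chi_{E_k},g).
$$
Taking $L^{p,\infty}(w)$-norms, applying the (renormed) triangle inequality, and invoking Theorem~3.3 term by term, I obtain
$$
\left\|M^{\otimes}(f,g)\right\|_{L^{p,\infty}(w)}\;\lesssim\;\left\|g\right\|_{L^{p_2,1}(w_2)}\sum_{k\in\mathbb Z}2^{k}w_1(E_k)^{1/p_1}.
$$

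To close the argument I would invoke the standard equivalence $\sum_{k\in\mathbb Z}2^k w_1(E_k)^{1/p_1}\approx\|f\|_{L^{p_1,1}(w_1)}$. This is obtained by discretizing the defining integral $\|f\|_{L^{p_1,1}(w_1)}=\int_0^\infty w_1(\{f>t\})^{1/p_1}dt$ at the dyadic levels $t=2^k$, using monotonicity of the distribution function on each $[2^k,2^{k+1}]$ to compare it with $\sum_k 2^k w_1(\{f>2^k\})^{1/p_1}$, and then using the subadditivity $(a+b)^{1/p_1}\le a^{1/p_1}+b^{1/p_1}$ (valid since $p_1\ge 1$) together with the identity $w_1(\{f>2^k\})=\sum_{j\ge k}w_1(E_j)$ to relate that sum to $\sum_k 2^k w_1(E_k)^{1/p_1}$. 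The second case of the theorem, with hypotheses on $w_1$ and $w_2$ exchanged, follows verbatim by running the same layer-cake on $g$ and using the symmetric form of Theorem~3.3 in which the characteristic function occupies the second slot.

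The only substantive point is the triangle-inequality step: this is precisely where the assumption $p>1$ enters and fails for $p\le 1$, and is exactly the obstruction hinted at in the remark following Theorem~3.3. The remainder is routine dyadic bookkeeping combined with sublinearity of $M$.
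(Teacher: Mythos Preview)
Your proof is correct and follows precisely the strategy the paper indicates in the one-line remark preceding the theorem: extend the characteristic-function estimate (Theorem~3.4) to general $f$ using that $L^{p,\infty}(w)$ is a Banach space when $p>1$. You have supplied the natural dyadic layer-cake details that the paper leaves implicit; the equivalence $\sum_k 2^k w_1(E_k)^{1/p_1}\approx\|f\|_{L^{p_1,1}(w_1)}$ and the countable triangle inequality in the renormed $L^{p,\infty}(w)$ are both standard and correctly justified.
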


Similar results can be proved for  $A_p$ weights. We state them without proofs since these are completely analogous. 

\begin{proposition}
If $w_1,w_2\in A_{\infty}$, then the  following statements are equivalent:
\begin{enumerate}
\item $w_2 \in A_{p_2}$.
\item For every measurable set $E$ and every measurable function $g$,
$$
\left \| \chi_E Mg \right \|_{L^{p,\infty}(w)} \lesssim w_1(E)^{1/p_1} \left \| g \right \|_{L^{p_2}(w_2)}. 
$$
\item For every cube $Q$ and every measurable function $g$,
$$
\left \| \chi_Q Mg \right \|_{L^{p,\infty}(w)} \lesssim w_1(Q)^{1/p_1} \left \| g \right \|_{L^{p_2}(w_2)}. 
$$
\end{enumerate}
\end{proposition}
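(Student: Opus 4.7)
The plan is to mirror the proof of Proposition~\ref{apr} step by step, with the only substantive changes being the use of classical Muckenhoupt theory ($M : L^{p_2}(w_2) \to L^{p_2,\infty}(w_2)$ iff $w_2 \in A_{p_2}$) in place of the restricted weak-type endpoint, and the use of the standard duality $(L^{p_2}(w_2))^{*} = L^{p_2'}(w_2)$ in place of the Lorentz space duality. The three-part cycle $1)\Rightarrow 2)\Rightarrow 3)\Rightarrow 1)$ is kept intact, with Lemma~\ref{cubes} again serving as the bridge between $w(Q)^{1/p}$ and $w_1(Q)^{1/p_1}w_2(Q)^{1/p_2}$ in the final step.

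For $1)\Rightarrow 2)$, I would apply Lemma~\ref{charac} to split
$$
\|\chi_E Mg\|_{L^{p,\infty}(w)} \le \|\chi_E\|_{L^{p_1,\infty}(w_1)}\, \|Mg\|_{L^{p_2,\infty}(w_2)},
$$
recognize $\|\chi_E\|_{L^{p_1,\infty}(w_1)} = w_1(E)^{1/p_1}$, and invoke the Muckenhoupt weak-type bound for $M$ on $L^{p_2}(w_2)$. The implication $2)\Rightarrow 3)$ is immediate by taking $E = Q$.

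The main content is $3)\Rightarrow 1)$. I would fix a cube $Q$ and use $L^{p_2}(w_2)$--$L^{p_2'}(w_2)$ duality (with pairing $\int fh\, w_2$) to pick $g \ge 0$ with $\|g\|_{L^{p_2}(w_2)} \le 1$ such that
$$
\|\chi_Q w_2^{-1}\|_{L^{p_2',}(w_2)} \lesssim \int g\,\chi_Q w_2^{-1}\, w_2 = \int_Q g.
$$
Since $Mg \ge \bigl(\dashint_Q g\bigr)\chi_Q$ and $\|\chi_Q\|_{L^{p,\infty}(w)} = w(Q)^{1/p}$, hypothesis 3) yields
$$
\Bigl(\dashint_Q g\Bigr) w(Q)^{1/p} \lesssim w_1(Q)^{1/p_1},
$$
and Lemma~\ref{cubes} converts $w(Q)^{1/p}$ into $w_1(Q)^{1/p_1}w_2(Q)^{1/p_2}$, giving
$$
\|\chi_Q w_2^{-1}\|_{L^{p_2'}(w_2)} \lesssim \frac{|Q|}{w_2(Q)^{1/p_2}}.
$$
Raising both sides to the $p_2'$ power and using $\|\chi_Q w_2^{-1}\|_{L^{p_2'}(w_2)}^{p_2'} = \int_Q w_2^{1-p_2'} = \int_Q w_2^{-1/(p_2-1)}$, a routine manipulation with $(p_2-1)p_2' = p_2$ and $p_2'/p_2 = 1/(p_2-1)$ produces
$$
\frac{w_2(Q)}{|Q|}\Bigl(\frac{1}{|Q|}\int_Q w_2^{-1/(p_2-1)}\Bigr)^{p_2-1} \lesssim 1,
$$
which is exactly $w_2 \in A_{p_2}$ after taking a supremum over $Q$.

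There is no real obstacle: the author's claim that the argument is ``completely analogous'' to that of Proposition~\ref{apr} is accurate, and the only care required is the bookkeeping with the conjugate exponent $p_2'$ to land on the $A_{p_2}$ characterization rather than its restricted variant. A minor nuisance is the edge case $p_2 = 1$, where $L^{p_2'}(w_2) = L^\infty(w_2)$ and the extremizing $g$ must be approximated instead of attained; this can be handled by choosing $g$ concentrated on a set where $w_2^{-1}$ is nearly maximal, or by first establishing the statement for $p_2 > 1$.
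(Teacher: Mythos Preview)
Your proposal is correct and follows precisely the route the paper intends: the authors state this proposition without proof, remarking only that it is ``completely analogous'' to Proposition~\ref{apr}, and your argument is exactly that analogy carried out, with Muckenhoupt's weak-type characterization and $L^{p_2}$--$L^{p_2'}$ duality replacing their restricted counterparts. The bookkeeping leading to the $A_{p_2}$ condition is accurate, and your remark on the $p_2=1$ edge case is apt (in fact the same duality argument still yields $\mathrm{ess\,sup}_Q\, w_2^{-1} \lesssim |Q|/w_2(Q)$, which is $A_1$).
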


\begin{proposition}
If $w_1,w_2\in A_{\infty}$  and $p_2>1$, then the  following statements are equivalent:
\begin{enumerate}
\item $w_2 \in A_{p_2}$.
\item For every measurable set $E$ and every measurable function $g$,
$$
\left \| \chi_E Mg \right \|_{L^{p}(w)} \lesssim w_1(E)^{1/p_1} \left \| g \right \|_{L^{p_2}(w_2)}. 
$$
\item For every cube $Q$ and every measurable function $g$,
$$
\left \| \chi_Q Mg \right \|_{L^{p}(w)} \lesssim w_1(Q)^{1/p_1} \left \| g \right \|_{L^{p_2}(w_2)}. 
$$
\end{enumerate}
\end{proposition}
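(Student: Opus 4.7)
The plan is to follow the same $(1)\Rightarrow(2)\Rightarrow(3)\Rightarrow(1)$ chain used in Proposition~\ref{apr}, with the simplification that in the strong-type $L^p$ setting Hölder's inequality with change of measures \emph{does} hold, so the Lorentz-space substitutes from Section~\ref{S-holder} are not needed. For $(1)\Rightarrow(2)$, since $w=w_1^{p/p_1}w_2^{p/p_2}$, the classical Hölder inequality with exponents $p_1/p$ and $p_2/p$ gives
\[
\|\chi_E Mg\|_{L^p(w)} \le \|\chi_E\|_{L^{p_1}(w_1)}\|Mg\|_{L^{p_2}(w_2)},
\]
and Muckenhoupt's strong-type theorem applied to $w_2\in A_{p_2}$ (which uses $p_2>1$) dominates the second factor by $\|g\|_{L^{p_2}(w_2)}$, while the first factor equals $w_1(E)^{1/p_1}$. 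The implication $(2)\Rightarrow(3)$ is the trivial specialization $E=Q$.

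The main step is $(3)\Rightarrow(1)$, for which I would reproduce, essentially verbatim, the duality argument used in the proof of Proposition~\ref{apr}. Fix a cube $Q$. Because $p_2>1$, one may use $L^{p_2'}(w_2)$ as the dual of $L^{p_2}(w_2)$ to choose a non-negative $g$ with $\|g\|_{L^{p_2}(w_2)}\le 1$ satisfying
\[
\|\chi_Q w_2^{-1}\|_{L^{p_2'}(w_2)} \lesssim \int \chi_Q w_2^{-1}\, g\, w_2 = \int_Q g.
\]
Since $Mg\ge \bigl(\dashint_Q g\bigr)\chi_Q$, hypothesis $(3)$ yields
\[
\Bigl(\dashint_Q g\Bigr)\, w(Q)^{1/p} \le \|\chi_Q Mg\|_{L^p(w)} \lesssim w_1(Q)^{1/p_1},
\]
which, combined with the previous display, produces
\[
\frac{w(Q)^{1/p}\,\|\chi_Q w_2^{-1}\|_{L^{p_2'}(w_2)}}{|Q|} \lesssim w_1(Q)^{1/p_1}.
\]
Lemma~\ref{cubes} now replaces $w(Q)^{1/p}$ by $w_1(Q)^{1/p_1}w_2(Q)^{1/p_2}$; the $w_1(Q)^{1/p_1}$ factor cancels, leaving $w_2(Q)^{1/p_2}\|\chi_Q w_2^{-1}\|_{L^{p_2'}(w_2)}/|Q|\lesssim 1$ uniformly in $Q$. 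Since $\|\chi_Q w_2^{-1}\|_{L^{p_2'}(w_2)}=\bigl(\int_Q w_2^{1-p_2'}\bigr)^{1/p_2'}$, this is exactly the $A_{p_2}$ condition.

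I do not expect a genuine obstacle: each step is a cosmetic variant of arguments already carried out in Section~\ref{S-main}. The hypothesis $p_2>1$ is used twice---to invoke Muckenhoupt's $L^{p_2}$ boundedness in $(1)\Rightarrow(2)$ and to dualize $L^{p_2}(w_2)$ in $(3)\Rightarrow(1)$---while the hypothesis $w_1,w_2\in A_\infty$ enters only through Lemma~\ref{cubes} in order to convert $w(Q)^{1/p}$ into the product $w_1(Q)^{1/p_1}w_2(Q)^{1/p_2}$.
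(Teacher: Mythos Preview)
Your proposal is correct and is precisely the argument the paper has in mind: the authors explicitly state this proposition ``without proofs since these are completely analogous'' to Proposition~\ref{apr}, and your adaptation---replacing the Lorentz-space H\"older substitute by the genuine H\"older inequality for $(1)\Rightarrow(2)$ and using $L^{p_2'}(w_2)$ duality in $(3)\Rightarrow(1)$---is exactly that analogy.
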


\begin{theorem}
If $w_1,w_2\in A_{\infty}$, then 
$$
 M^{\otimes}: L^{p_1}(w_1)\times L^{p_2}(w_2) \longrightarrow L^{p, \infty}\left(w_1^{p/p_1} w_2^{p/p_2}\right), 
$$
 if and only if $w_i\in A_{p_i}$, $i=1,2$. And if $1<p_1, p_2<\infty$, this last condition is also equivalent to 
$$
 M^{\otimes}: L^{p_1}(w_1)\times L^{p_2}(w_2) \longrightarrow L^{p}\left(w_1^{p/p_1} w_2^{p/p_2}\right).  
$$
\end{theorem}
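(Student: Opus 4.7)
The plan is to separate the theorem into four implications and verify each using the tools already collected in Sections~\ref{S-holder} and~\ref{S-main}, together with Muckenhoupt's classical characterization.

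For the sufficiency of $w_i \in A_{p_i}$ in the strong-type equivalence (where $1 < p_1, p_2 < \infty$), the key observation is that $p/p_1 + p/p_2 = 1$, so ordinary H\"older's inequality yields
\begin{equation*}
\|M^\otimes(f,g)\|_{L^p(w)}^p
= \int (Mf)^p w_1^{p/p_1}\,(Mg)^p w_2^{p/p_2}\,dx
\le \|Mf\|_{L^{p_1}(w_1)}^p\, \|Mg\|_{L^{p_2}(w_2)}^p,
\end{equation*}
after which Muckenhoupt's theorem $M:L^{p_i}(w_i)\to L^{p_i}(w_i)$ closes the estimate. The corresponding weak-type bound in $L^{p,\infty}(w)$ is then a free consequence via the trivial embedding $L^p(w)\hookrightarrow L^{p,\infty}(w)$.

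For the necessity of $w_i \in A_{p_i}$ in either equivalence, my strategy mimics the proof of the restricted weak type version given earlier in this section. Given the assumed bound, I would test it against a pair $(\chi_E,g)$, using $\chi_E\le M\chi_E$ pointwise to obtain
\begin{equation*}
\|\chi_E Mg\|_{L^{p,\infty}(w)}
\le \|M^\otimes(\chi_E,g)\|_{L^{p,\infty}(w)}
\lesssim w_1(E)^{1/p_1}\|g\|_{L^{p_2}(w_2)},
\end{equation*}
and similarly in the $L^p$ scale in the strong case. Restricting $E$ to a cube $Q$, the relevant preceding Proposition (characterizing $A_{p_2}$ by exactly this testing condition against cubes) then delivers $w_2\in A_{p_2}$. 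Symmetrically, testing at $(f,\chi_E)$ yields $w_1\in A_{p_1}$.

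Since all the preliminary machinery is in place, the only real difficulty concerns the sufficiency of the $A_{p_i}$ conditions for the weak-type equivalence when one of the $p_i$ equals $1$: the H\"older argument above breaks down at that endpoint since \eqref{holder} itself fails, as shown in Section~\ref{S-holder}. In the case $p_1=p_2=1$ this is precisely the Lerner--Ombrosi--P\'erez--Torres--Trujillo endpoint result recalled in the introduction. Apart from this endpoint subtlety, the proof is essentially a bookkeeping exercise tying together Muckenhoupt's theorem and the characterizing Propositions stated just above.
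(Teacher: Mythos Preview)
Your proposal is correct and matches the paper's approach: the paper in fact omits the proof entirely, noting only that it is ``completely analogous'' to the preceding restricted weak type results, and your outline is exactly that analogy made explicit---necessity via the cube-testing Propositions characterizing $A_{p_i}$, sufficiency via ordinary H\"older plus Muckenhoupt (this is \eqref{mfold} from the introduction) together with the Lerner et al.\ endpoint. The mixed endpoint case $p_1=1<p_2$ that you flag is not handled separately by the paper either, so on that point you are at least as careful as the authors.
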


\end{document}